\newtheorem{theorem}{Theorem}[section]
\newtheorem{corollary}[theorem]{Corollary}
\newtheorem{proposition}[theorem]{Proposition}
\newtheorem{remark}[theorem]{Remark}
\theoremstyle{definition}
\newtheorem{definition}[theorem]{Definition}
\newcommand{\RR}{\mathbb{R}}
\newcommand{\QQ}{\mathbb{Q}}
\newcommand{\CC}{\mathbb{C}}
\newcommand{\NN}{\mathbb{N}}
\newcommand{\AAA}{\mathbb{A}}
\newcommand{\KK}{\mathbb{K}}
\newcommand{\ZZ}{\mathbb{Z}}
\newcommand{\FF}{\mathbb{F}}
\newcommand{\ee}{\mathrm{e}}
\newcommand{\EE}{\mathbb{E}}
\newcommand{\ssk}{\smallskip}
\newcommand{\msk}{\medskip}
\newcommand{\cc}{{\mathfrak{c}}}   
\newcommand{\PP}{\mathbb{P}}
\newcommand{\TQ}{$\mathcal{T}$Q}
 \DeclareMathOperator{\card}{\text{card}}
\title{Topological Transcendental Fields}
\author{Taboka Prince Chalebgwa and Sidney A. Morris}
\address{The Fields Institute for Research in Mathematical Sciences,
222 College Street, Toronto, Ontario, MST 3J1,
Canada}
\email{taboka@aims.ac.za}
\address{School of Engineering, IT and Physical Sciences,
Federation University Australia,
PO Box 663,
Ballarat, Victoria, 3353,
Australia
and\newline
Department of Mathematical and Physical Sciences,
La Trobe University,
Melbourne, Victoria, 3086,
Australia}
\email{morris.sidney@gmail.com}
\thanks{This paper is dedicated to Alf van der Poorten who introduced the second author to transcendental number theory.  The second author also thanks John Loxton for suggesting he look at topological transcendental fields. The first author's research is supported by the Fields Institute for Research in Mathematical Sciences, via the Fields-Ontario postdoctoral Fellowship.}
\keywords{topological field; transcendental number; algebraic; countably infinite; homeomorphic; extension field; subfield}
\begin{document}

\maketitle

\begin{abstract}This article initiates the study of  topological transcendental  fields $\FF$ which are   subfields of the topological  field $\CC$  of  all complex numbers such that $\FF$  consists of only rational numbers and a nonempty set of transcendental numbers.   $\FF$, with the topology it inherits as a subspace of $\CC$, is a topological field.  Each topological  transcendental field is a separable metrizable zero-dimensional space and algebraically is  $\QQ(T)$, the extension of the field of rational numbers by a set $T$ of transcendental numbers.   It is proved that there exist precisely $2^{\aleph_0}$ countably infinite topological transcendental  fields and each is homeomorphic  to the space $\QQ$ of rational numbers with its usual topology.
It is also shown that there is a class of  $2^{2^{\aleph_0} }$ of topological  transcendental  fields  of the form $\QQ(T)$ with $T$ a set of Liouville numbers,  no two of which are homeomorphic. \end{abstract}


\section{Preliminaries}

We begin by setting out our notation and making some simple preliminary observations.

\begin{remark}\label{Remark 1}\rm
We shall discuss four fields: $\CC$, the field of all complex numbers; $\RR$, the field of all real numbers; $\AAA$, the field of all algebraic numbers; and $\QQ$, the field of all rational numbers. Observe the following easily verified facts:
\begin{itemize}
\item[\rm(i)] the fields $\CC$ and $\RR$ have cardinality $\cc$, the cardinalty of the continuum; 
\item[\rm(ii)] the fields $\AAA$ and $\QQ$ have cardinality $\aleph_0$; 
\item[\rm(iii)]  $\CC$  with its euclidean topology is homeomorphic   to $\RR\times \RR$, where $\RR$ has its euclidean topology; 
\item[\rm(iv)] each of these four fields has a natural topology; $\CC$  and $\RR$ have euclidean topologies, while $\AAA$ and $\QQ$ inherit a natural topology as a subspace of $\CC$;
\item[\rm(v)] the field $\QQ$ is a dense subfield of the topological field $\RR$ (that is, the closure, in the topological sense, of $\QQ$ is $\RR$);
\item[\rm(vi)]   the topological field $\AAA$ is a dense subfield of the topological field $\CC$; 
\item[\rm(vii)]  $\CC\supset \AAA \supset\AAA\cap\RR \supset \QQ$, but $\AAA$ is not a subset of $\RR$;
\item[\rm(viii)] the field $\CC$ is a vector space of dimension $\cc$  over $\AAA$ and it is also a vector space of dimension $\cc$ over $\QQ$; 
\item[\rm(ix)] $\AAA$ is  a vector space of countably infinite dimension over $\QQ$;
\item[\rm(x)]  $\NN$ denotes the set of positive integers and $\ZZ$ denotes the set of all integers, each with the discrete topology;
\item[\rm(xi)]  $\mathcal{T}$ is  the topological space of all transcendental numbers, where $\mathcal{T}= \CC\setminus \AAA$ and has a natural topology as a subspace of $\CC$.  The topology of $\mathcal{T}$ is separable, metrizable, and zero-dimensional. Also the cardinality of $\mathcal{T}$ is $\cc$ and $\mathcal{T}$ is dense in $\CC$.
\end{itemize}
\end{remark}

\begin{remark}\rm\label{Remark 1A} Now we mention some not so easily verified known results:
\begin{itemize}
\item[\rm(i)]  $\mathcal{T}$ is homeomorphic to  the space $\PP$ of all irrational real numbers. $\PP$ is also homeomorphic to the countably infinite product  $\NN^\NN$. (See  \cite[\S1.9]{vanMill}.)
\item[\rm(ii)]  \TQ\     denotes the set $\mathcal{T}\cup \QQ$.  It is also homeomorphic to $\PP$.
\item[\rm(iii)] Kurt Mahler in 1932 classified the set of all transcendental numbers $\mathcal{T}$ into three disjoint classes: S, T, and U. For a discussion of this important classification, see \cite[Chapter 8]{Baker}. It has been proved that each of these sets has cardinality $\cc$. Further, the Lebesgue measure of T and U are each zero. So S has full measure, that is its complement has measure zero.
\item[\rm(iv)] We introduce the classes SQ = S $\cup$ $\QQ$, TQ = T $\cup$ $\QQ$, UQ = U $\cup$ $\QQ$. Clearly SQ, TQ, and UQ each have cardinality $\cc$, TQ and UQ have measure zero, and SQ has full measure.

\item[\rm(v)] In 1844 Joseph Liouville showed that all members of  a certain class of numbers, now known as the Liouville numbers, are transcendental. A real number $x$  is said to be a\emph{ Liouville number} if for every positive integer $n$, there exists a pair $(p,q)$ of integers with $q>1$, such that $0<|x-\frac{p}{q}|<\frac{1}{q^n}$. (See \cite{Angell}.)  The Liouville numbers are a subset of the Mahler class U. We denote the set of Liouville numbers by L and the set L $\cup \QQ$ by LQ.
\end{itemize}
\end{remark}

Recall the following definitions from \cite{Weintraub}. While Weintraub stated the definitions and propositions using countably infinite sets, there is no problem to state these using sets of any cardinality.

\begin{definition}\label{Definition 1}\rm Let $\EE$ be an extension field of $\FF$. Then $\alpha\in \EE$ is said to be \emph{transcendental over $\FF$} if $\alpha$ is not a root of any polynomial $p(X)\in \FF[X]$, the ring of polynomials over $\FF$ in the variable $X$ with coefficients in $\FF$. The quantity $\alpha\in \EE\setminus \FF$ is said to be \emph{algebraic over $\FF$} if it is not transcendental over $\FF$.
\end{definition}

\begin{definition}\label{Definition 2}\rm  An extension field $\EE$ of a field $\FF$ is said to be a \emph{completely transcendental extension of $\FF$} if $\alpha$ is transcendental over $\FF$,
for every $\alpha\in \EE\setminus \FF$.
\end{definition}

\begin{definition}\label{Definition 3}\rm  Let $\EE$ be an extension field of the field $\FF$. Then $\EE$ is a \emph{purely transcendental extension of $\FF$} if $\EE$ is isomorphic to the field of rational functions $\QQ(\{X_i:i\in I|\})$  of variables $\{X_i:i\in I\}$, where $I$ is a finite or infinite index set.
\end{definition}

\begin{definition}\label{Definition 4}\rm
Let the field $\EE$ be an extension of the field $\FF$. If $I$ is any index set, the subset  $S=\{s_i:i\in I\}$ of $\EE$ is said to be \emph{algebraically independent} over $\FF$ if  for all polynomials $p(X_1,X_2,\dots, X_n)\in F[X_i: i\in I]$, $n\in \NN$,  $p(s_1,s_2,\dots, s_n)\ne 0$, for all $s_1,s_2,\dots, s_n\in S$. By convention if $S=\emptyset$, then $S$ is said to be algebraically independent over $\FF$. \end{definition}

\begin{remark}\rm\label{Remark 1A}
Observe that if a set $S$ is algebraically independent over $\QQ$, then it is algebraically independent  over $\AAA$. Further, algebraic independence implies linear indpendence.\end{remark}

\begin{remark}\rm\label{3A} Central to his definition of  the classes $S$, $T$, and $U$, was the feature that Mahler wanted, namely that any two algebraically dependent transcendental numbers lie in the same class, S, T, or U.  
\end{remark}

We shall use \cite[Lemma 6.1.5]{Weintraub} and \cite[Lemma 6.1.8]{Weintraub} which are stated here as Proposition~\ref{Proposition 2} and Proposition~\ref{Proposition 1}. 

\begin{proposition}\label{Proposition 1} Let $\EE$ be a purely transcendental extension of a field $\FF$. Then $\EE$ is a completely transcendental extension of $\FF$. 
\end{proposition}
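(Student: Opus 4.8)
The plan is to unwind the two definitions and reduce to the classical fact that a field is relatively algebraically closed in any of its rational function fields. First I would use the defining isomorphism to replace $\EE$ by the rational function field $\FF(\{X_i : i \in I\})$, with the isomorphism chosen to fix $\FF$ pointwise (so that being \emph{transcendental over} $\FF$ is transported faithfully). Fix an arbitrary $\alpha \in \EE \setminus \FF$; I would write $\alpha = f/g$ with $f,g$ polynomials in the $X_i$, observe that only finitely many variables $X_1,\dots,X_n$ actually occur in $f$ and $g$, and hence work inside the fraction field of the unique factorization domain $R = \FF[X_1,\dots,X_n]$, cancelling common factors so that $f$ and $g$ are coprime in $R$. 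Note also $\alpha \ne 0$, since $0 \in \FF$.

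Next I would argue by contradiction: suppose $\alpha$ is algebraic over $\FF$, and choose a nonzero $P(Y) = \sum_{k=0}^{m} a_k Y^k \in \FF[Y]$ of \emph{minimal} degree with $P(\alpha) = 0$, so that $a_m \ne 0$, $m \ge 1$, and (by minimality, using $\alpha \ne 0$) $a_0 \ne 0$. Substituting $\alpha = f/g$ and clearing denominators yields the identity $\sum_{k=0}^{m} a_k f^k g^{m-k} = 0$ in $R$. Isolating the term $a_m f^m$ shows $g \mid a_m f^m$; since $\gcd(f,g)=1$ and $a_m$ is a unit of $R$, the element $g$ must itself be a unit, so after rescaling I may assume $\alpha = f \in R$. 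Then $a_0 = -f\sum_{k=1}^{m} a_k f^{k-1}$ shows $f \mid a_0$, and since $a_0 \in \FF \setminus \{0\}$ is a unit, $f$ is a unit, i.e. $\alpha = f \in \FF$ --- contradicting $\alpha \notin \FF$. Hence $\alpha$ is transcendental over $\FF$, and as $\alpha \in \EE \setminus \FF$ was arbitrary, $\EE$ is a completely transcendental extension of $\FF$.

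I do not anticipate a serious obstacle; the points that need care are (a) transporting the statement across the isomorphism so that $\FF$ is genuinely fixed, (b) reducing to finitely many variables in order to land in a UFD, and (c) recalling that ``transcendental over $\FF$'' means ``a root of no \emph{nonzero} polynomial over $\FF$'', which is what justifies the minimal-degree choice of $P$ and the elimination of the case $a_0 = 0$. An alternative, marginally more structural route would induct on the number of variables and invoke the one-variable fact that $K$ is algebraically closed in $K(X)$: for $\alpha = f/g \in K(X)\setminus K$ in lowest terms one has $[K(X):K(\alpha)] = \max(\deg f, \deg g) < \infty$, so $X$ is algebraic over $K(\alpha)$, whence an $\alpha$ algebraic over $K$ would force $X$ to be algebraic over $K$, which is absurd. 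I would nevertheless prefer the unique-factorization argument above, since it is entirely self-contained and uses nothing beyond Definitions~\ref{Definition 2} and~\ref{Definition 3}.
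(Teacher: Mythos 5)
Your proof is correct. The paper itself does not prove this proposition at all: it is quoted verbatim as Lemma~6.1.5 of Weintraub's \emph{Galois Theory} and used as a black box, so there is no internal argument to compare against. Your unique-factorization argument is the standard self-contained proof and is sound at every step: reducing to finitely many variables puts you in the UFD $R=\FF[X_1,\dots,X_n]$ whose units are exactly $\FF\setminus\{0\}$; minimality of $P$ together with $\alpha\ne 0$ legitimately forces $a_0\ne 0$; coprimality of $f$ and $g$ gives $\gcd(f^m,g)=1$, so $g\mid a_m f^m$ does force $g$ to be a unit; and the final divisibility $f\mid a_0$ correctly collapses $\alpha$ into $\FF$. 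You were also right to flag that Definition~\ref{Definition 3} only asserts an abstract isomorphism (indeed the paper's definition even has a typo, writing $\QQ(\{X_i\})$ where $\FF(\{X_i\})$ is meant), so the isomorphism must be taken to fix $\FF$ pointwise for the statement to transport; the intended reading is that of an $\FF$-isomorphism, as in Weintraub. The only cost of your approach relative to the paper's citation is length; what it buys is that the paper's reliance on \cite{Weintraub} for this step becomes unnecessary.
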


\begin{proposition}\label{Proposition 2} Let $\EE$ be an extension field of the field $\FF$ and let $S=\{s_i:i\in I\}$ be  algebraically independent over $\FF$, where $I$ is an index set. Then the extension field $\FF(S)$ is a purely transcendental field.
\end{proposition}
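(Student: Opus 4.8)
The plan is to exhibit an explicit $\FF$-isomorphism between $\FF(S)$ and the field of rational functions in the indeterminates $\{X_i : i \in I\}$ over $\FF$, which is exactly what Definition~\ref{Definition 3} requires. First I would introduce the evaluation homomorphism $\varphi\colon \FF[X_i : i\in I]\to\EE$ that fixes $\FF$ pointwise and sends $X_i\mapsto s_i$ for every $i\in I$. Since each polynomial in $\FF[X_i : i\in I]$ involves only finitely many of the variables, $\varphi$ is well defined and is a ring homomorphism even when $I$ is infinite; its image is the subring $\FF[S]$ of $\EE$ generated by $\FF\cup S$.

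Next I would compute the kernel of $\varphi$. An element $p = p(X_{i_1},\dots,X_{i_n})$ lies in $\ker\varphi$ precisely when $p(s_{i_1},\dots,s_{i_n}) = 0$ in $\EE$; by Definition~\ref{Definition 4}, the algebraic independence of $S$ over $\FF$ says that this can happen only if $p$ is the zero polynomial. Hence $\ker\varphi = \{0\}$, so $\varphi$ is injective and yields an $\FF$-algebra isomorphism $\FF[X_i:i\in I]\cong\FF[S]$.

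Then I would pass to fraction fields. As $\FF[X_i : i\in I]$ is an integral domain and $\varphi$ is an injective homomorphism into the field $\EE$, the universal property of localization extends $\varphi$ uniquely to an injective homomorphism $\widehat\varphi$ from the rational function field $\FF(\{X_i : i\in I\})$ into $\EE$ via $\widehat\varphi(p/q) = \varphi(p)/\varphi(q)$ for $q\ne 0$. The image of $\widehat\varphi$ is the set of all such quotients, which is the smallest subfield of $\EE$ containing $\FF\cup S$, that is, $\FF(S)$ itself. Thus $\widehat\varphi$ is an $\FF$-isomorphism of $\FF(\{X_i:i\in I\})$ onto $\FF(S)$, and $\FF(S)$ is a purely transcendental extension of $\FF$, as claimed.

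I expect no serious obstacle here: the argument is the standard chain ``evaluation map, kernel computation, localization,'' and the only point needing a word of care is the infinite index set $I$, which is handled by the fact that each polynomial has finite support. One should also record explicitly that, as a subfield of $\EE$, $\FF(S)$ coincides with the field of fractions of $\FF[S]$; this is immediate because any subfield of $\EE$ containing $\FF$ and $S$ must contain $\FF[S]$ and hence every quotient of two of its nonzero elements.
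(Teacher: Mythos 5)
Your proof is correct. The paper does not actually prove this proposition itself --- it imports it verbatim as Lemma 6.1.5 of \cite{Weintraub} --- and your argument (evaluation homomorphism $X_i\mapsto s_i$, kernel killed by algebraic independence, extension to the fraction field $\FF(\{X_i:i\in I\})\cong\FF(S)$) is exactly the standard proof of that lemma, including the correct handling of an infinite index set via the finite support of each polynomial.
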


Recall the following definition from, for example, \cite{Warner, Shell}:

\begin{definition}\label{Definition 5}\rm A field $\FF$ with a topology $\tau$ is said to be a \emph{topological field} if the  field operations:
\begin{itemize}
\item[(i)] $(x,y) \to x+y$ from $\FF\times \FF$ to $\FF$,
\item[(ii)] $x\to -x$ from $\FF$ to $\FF$,
\item[(iii)] $(x,y) \to xy$ from $\FF\times \FF$ to $\FF$, and
\item[(iv)] $x\to x^{-1}$ from $\FF$ to $\FF$
are all continuous.
\end{itemize}
\end{definition}

The standard examples of topological fields of  characteristic $0$ are $\RR$, $\CC$, and $\QQ$ with the usual euclidean topologies. Indeed the only connected locally compact  Hausdorff fields  are $\RR$ and $\CC$. However Shakhmatov in \cite{Shakhmatov} proved the following beautiful result: (See also \cite{Shakhmatov1999}.)

\begin{theorem}\label{Theorem 1}
On every field  $\FF$ of infinite cardinality $\aleph$, there exist precisely $2^{2^\aleph}$ distinct topologies which make $\FF$ a topological field.
\end{theorem}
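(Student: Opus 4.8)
The plan is to prove the two inequalities separately. The upper bound is immediate: a topology on the underlying set of $\FF$ is an element of $\mathcal P(\mathcal P(\FF))$, a set of cardinality $2^{2^{\aleph}}$, so $\FF$ carries at most $2^{2^{\aleph}}$ topologies of any kind, in particular at most that many field topologies. All the content is in the lower bound, and I would reach it by a soft reduction followed by a hard construction.

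For the reduction I would first record that the supremum $\bigvee_{i}\tau_i$ of an arbitrary family of field topologies on a field is again a field topology: given a basic neighbourhood $U_1\cap\cdots\cap U_n$ of $0$ in the supremum, with $U_k$ a $\tau_{i_k}$-neighbourhood of $0$, one shrinks each $U_k$ inside $\tau_{i_k}$ so as to control addition, negation, multiplication, multiplication by a fixed scalar, and inversion near $1$ (that is, $1+V_k\subseteq\FF^{\times}$ and $(1+V_k)^{-1}\subseteq 1+U_k$), and then intersects; a short check shows $V_1\cap\cdots\cap V_n$ witnesses continuity of every field operation. Granting this, it suffices to build a family $\{\tau_i:i\in I\}$ of field topologies on $\FF$ with $|I|=2^{\aleph}$ which is \emph{independent}, meaning $\tau_i\not\le\bigvee_{j\in I\setminus\{i\}}\tau_j$ for each $i\in I$. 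Indeed, putting $\sigma_J=\bigvee_{j\in J}\tau_j$ for $J\subseteq I$: if $J\ne J'$ and $i\in J\setminus J'$, then $\tau_i\le\sigma_J$ while $\sigma_{J'}\le\bigvee_{j\in I\setminus\{i\}}\tau_j$, which does not dominate $\tau_i$; hence $\sigma_J\ne\sigma_{J'}$, so $J\mapsto\sigma_J$ injects $\mathcal P(I)$ into the set of field topologies on $\FF$, producing $2^{2^{\aleph}}$ of them.

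Everything thus reduces to manufacturing $2^{\aleph}$ mutually independent field topologies on an arbitrary field $\FF$ of cardinality $\aleph$, and this is where I expect the real difficulty to lie. When $\FF$ has large transcendence degree over its prime field $\FF_0$ — in particular for every uncountable $\FF$, where $\operatorname{trdeg}(\FF/\FF_0)=\aleph$ — valuation theory should do the work: inside a rational function subfield $\FF_0(T)\subseteq\FF$ with $|T|=\aleph$ one locates $2^{\aleph}$ suitably independent places, takes the associated valuation topologies, and extends them to $\FF$ along the algebraic extension, inequivalence being preserved. The stubborn case is a countable field of small transcendence degree, the prototypes being $\QQ$, $\overline{\QQ}$ and $\overline{\FF_p}$: here the classical valuation-type topologies are far too scarce and too rigid to furnish an independent family of size $2^{\aleph}$, so one must construct genuinely non-classical field topologies — for instance, using a Kunen-style independent family of $2^{\aleph}$ subsets of a fixed countably infinite subset of $\FF$, or embeddings of $\FF$ into ultraproducts of its $p$-adic completions indexed by the $2^{2^{\aleph_0}}$ ultrafilters on the primes, to attach to each parameter a neighbourhood filter of $0$ engineered so that exactly the prescribed sequences converge to $0$. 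The hard part is then to verify, uniformly across all isomorphism types of $\FF$, that each such filter genuinely obeys the field-topology axioms (continuity of inversion being the delicate one), that the topology it defines is non-discrete, and that distinct parameters yield topologies that are independent in the strong sense above; it is precisely this package of verifications, completing Kiltinen's original argument, that is supplied by Shakhmatov's comment \cite{Shakhmatov1999}, and it is the step I would budget the most effort for.
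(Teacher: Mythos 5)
First, a point of reference: the paper does not prove this statement at all --- Theorem~\ref{Theorem 1} is quoted from Shakhmatov \cite{Shakhmatov} (see also \cite{Shakhmatov1999}, and the earlier work of Kiltinen and Podewski on the same counting problem), so there is no in-paper argument to measure your attempt against. Judged on its own terms, your outline identifies the correct and standard strategy. The upper bound is indeed trivial. The reduction for the lower bound is sound: the supremum of an arbitrary family of field topologies is again a field topology (the coordinate-wise shrinking you describe does handle inversion as well as the ring operations), and your injectivity argument $J\mapsto\sigma_J$ from an independent family $\{\tau_i : i\in I\}$ with $|I|=2^{\aleph}$ is correct. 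This is exactly the architecture of the known proofs.

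The genuine gap is that the entire content of the theorem lies in the step you defer: producing, on \emph{every} infinite field of cardinality $\aleph$, a family of $2^{\aleph}$ field topologies that is independent in your strong sense. For uncountable fields the valuation-theoretic route is plausible but not carried out; in particular you do not verify that the $2^{\aleph}$ places you ``locate'' yield topologies that remain \emph{independent} --- not merely pairwise inequivalent --- after extension along the algebraic closure inside $\FF$, and independence of a family of size $2^{\aleph}$ is a far stronger requirement than pairwise distinctness. For countable fields such as $\QQ$, $\overline{\QQ}$ and $\overline{\FF_p}$ you explicitly concede that non-classical topologies must be built and that the verification of the field-topology axioms, non-discreteness, and independence is the step you would budget the most effort for. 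That step \emph{is} the theorem; without it the proposal is a correct reduction of the problem to an equally hard problem, not a proof. As it stands the attempt should be regarded as an accurate road map to the literature rather than a self-contained argument.
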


Motivated by the definition of a transcendental group introduced in \cite{Morris2023}, we define here the notion of a {\it topological transcendental field}.

\begin{definition}\label{Definition 6}\rm  The topological field $\FF$ is  said to be a \emph{topological transcendental field} if algebraically it is a subfield of $\CC$, is a subset of $\QQ\cup \mathcal{T}$, and has the topology it inherits as a subspace of $\CC$.
\end{definition}

\begin{remark}\label{Remark 2}\rm Of course the underlying field of a topological transcendental field is a completely transcendental extension of $\QQ$.
\end{remark}

\section{Countably Infinite Transcendental Fields} 

\msk

\begin{proposition}\label{Proposition 2A} If $t$ is any transcendental number, then $\QQ(t)$ is a topological transcendental field.
\end{proposition}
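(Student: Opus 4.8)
The plan is to check, one by one, the three requirements in Definition~\ref{Definition 6}: that $\QQ(t)$ is algebraically a subfield of $\CC$, that $\QQ(t)\subseteq\QQ\cup\mathcal T$, and that the subspace topology inherited from $\CC$ makes it a topological field.

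First I would dispose of the algebraic content. Since $t\in\CC$, there is a smallest subfield of $\CC$ containing $\QQ\cup\{t\}$, and this is by definition $\QQ(t)$; that settles the first requirement. For the second, the single-element set $\{t\}$ is algebraically independent over $\QQ$ precisely because $t$ is a transcendental number, i.e.\ a root of no nonzero polynomial in $\QQ[X]$. Proposition~\ref{Proposition 2} then tells us that $\QQ(t)$ is a purely transcendental extension of $\QQ$, and Proposition~\ref{Proposition 1} upgrades this to: $\QQ(t)$ is a completely transcendental extension of $\QQ$. Hence every $\alpha\in\QQ(t)\setminus\QQ$ is transcendental over $\QQ$, that is $\alpha\in\mathcal T$; combining this with $\QQ\subseteq\QQ(t)$ gives $\QQ(t)\subseteq\QQ\cup\mathcal T$, as required.

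For the topological part, equip $\QQ(t)$ with the subspace topology from $\CC$. Because $\CC$ is a topological field, the maps $(x,y)\mapsto x+y$ and $(x,y)\mapsto xy$ from $\CC\times\CC$ to $\CC$, the map $x\mapsto -x$ from $\CC$ to $\CC$, and the map $x\mapsto x^{-1}$ from $\CC\setminus\{0\}$ to $\CC\setminus\{0\}$ are all continuous. Since $\QQ(t)$ is a subfield, each of these restricts to a map with the corresponding domain and codomain inside $\QQ(t)$ (for inversion one uses that $\QQ(t)\setminus\{0\}$ is closed under taking inverses), and the restriction of a continuous map to a subspace is continuous. Here one uses the standard fact that the product topology on $\QQ(t)\times\QQ(t)$ coincides with the topology it inherits as a subspace of $\CC\times\CC$, so that continuity of the restricted binary operations is genuinely continuity in the sense of Definition~\ref{Definition 5}. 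All four operations being continuous, $\QQ(t)$ is a topological field, hence a topological transcendental field.

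I do not expect a real obstacle: the statement is essentially a repackaging of the cited results of Weintraub together with the elementary observation that a subfield of a topological field is again a topological field in the subspace topology. The only point meriting a moment's care is the identification of the product topology on $\QQ(t)\times\QQ(t)$ with its subspace topology from $\CC\times\CC$, which is routine.
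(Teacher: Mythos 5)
Your proof is correct and follows essentially the same route as the paper, whose entire argument is to cite Proposition~\ref{Proposition 2} and Proposition~\ref{Proposition 1}; you simply spell out the application of those two results and make explicit the (routine) verification that a subfield of $\CC$ with the subspace topology is a topological field, which the paper leaves implicit.
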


\begin{proof}  This proposition is an immediate consequence of Proposition~\ref{Proposition 2} and Proposition~\ref{Proposition 1}.
\end{proof}

\begin{remark}\label{Remark 5}\rm Of course it is not true that if $t_1$ and $ t_2$ are transcendental, then $\QQ(t_1,t_2)$ is necessarily a transcendental field. For example if $t_1=\pi$ and $t_2=\pi+\sqrt{2}$, then  $\QQ(t_1,t_2)$ is not a transcendental field as $\sqrt{2}\in\QQ(t_1,t_2)$. In fact Paul Erdos \cite{Erdos} proved that for every real number $r$ there exist  Liouville numbers $t_3,t_4, t_5,t_6$ such that $t_3\cdot t_4=r$ and $t_5+t_6=r$.  Indeed he proved that for each real number $r$, there are uncountably many Liouville numbers   $t_3, t_4$ and $t_5,t_6$  with these properties. 
\end{remark}

Having established the existence of countably infinite topological transcendental fields, we now describe a very concrete example.
But first we state a well-known theorem on transcendental numbers, see Theorem 1.4 and the comments following it,  in \cite{Baker}.

\begin{theorem}\label{Theorem 2}\ {\rm[Lindemann-Weierstrass  Theorem]}\quad For $m\in \NN$,  any algebraic numbers $\alpha_1,\alpha_2,\dots,\alpha_m$ which are linearly independent over $\QQ$, the numbers $\ee^{\alpha_1},\ee^{\alpha_2},\dots,\ee^{\alpha_m}$ are algebraically independent.
\end{theorem}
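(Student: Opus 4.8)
The Lindemann--Weierstrass theorem is classical, and I would prove it by Hermite's method of auxiliary integrals, a complete account of which is in \cite[Chapter 1]{Baker}. First, a reduction to linear independence: if $\ee^{\alpha_1},\dots,\ee^{\alpha_m}$ satisfied a nontrivial polynomial relation over $\AAA$, then expanding it and collecting terms would turn it into a relation $\sum_j c_j\,\ee^{\beta_j}=0$ with nonzero $c_j\in\AAA$, where each $\beta_j$ is an integer linear combination $k_{j,1}\alpha_1+\dots+k_{j,m}\alpha_m$ of the $\alpha_i$ and the exponent vectors $(k_{j,1},\dots,k_{j,m})$ are pairwise distinct; since $\alpha_1,\dots,\alpha_m$ are linearly independent over $\QQ$, the $\beta_j$ would then be pairwise distinct algebraic numbers. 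So it suffices to prove: \emph{if $\beta_1,\dots,\beta_n$ are pairwise distinct algebraic numbers, then $\ee^{\beta_1},\dots,\ee^{\beta_n}$ are linearly independent over $\AAA$.}

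\emph{Reduction to a Galois-stable integral relation.} Assume, for a contradiction, that $\sum_{i=1}^{n}b_i\,\ee^{\beta_i}=0$ with $b_i\in\AAA$ not all zero, and discard the zero coefficients. I would choose a Galois number field $M$ containing all the $b_i$ and $\beta_i$ and form $\prod_{\rho\in\mathrm{Gal}(M/\QQ)}\bigl(\sum_i\rho(b_i)\,\ee^{\rho(\beta_i)}\bigr)$; the factor with $\rho=\mathrm{id}$ vanishes, so the whole product is $0$, and expanding and collecting like exponents yields a relation $\sum_{s=1}^{r}a_s\,\ee^{\gamma_s}=0$ --- still nontrivial, because a product of nonzero exponential polynomials is a nonzero exponential polynomial --- in which, after clearing denominators, the $a_s$ are nonzero algebraic integers, the $\gamma_s$ are distinct algebraic numbers whose set is $\mathrm{Gal}(M/\QQ)$-stable (hence a union of complete conjugacy classes), and $\mathrm{Gal}(M/\QQ)$ permutes the pairs $(a_s,\gamma_s)$. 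Now fix a large prime $p$ and $\ell\in\NN$ with every $\ell\gamma_s$ an algebraic integer, and for $t=1,\dots,r$ set
\[
f_t(x)=\ell^{rp}\,\frac{(x-\gamma_t)^{p-1}\prod_{s\ne t}(x-\gamma_s)^{p}}{(p-1)!},\qquad J_t=\sum_{s=1}^{r}a_s\int_{0}^{\gamma_s}\ee^{\gamma_s-x}f_t(x)\,dx .
\]
Repeated integration by parts gives $\int_{0}^{\gamma_s}\ee^{\gamma_s-x}f_t(x)\,dx=\ee^{\gamma_s}\sum_{k\ge0}f_t^{(k)}(0)-\sum_{k\ge0}f_t^{(k)}(\gamma_s)$, so the relation $\sum_s a_s\ee^{\gamma_s}=0$ cancels the exponential parts and leaves $J_t=-\sum_{s}\sum_{k\ge0}a_s f_t^{(k)}(\gamma_s)$.

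\emph{The contradiction.} Set $J:=\prod_{t=1}^{r}J_t$. Since $\mathrm{Gal}(M/\QQ)$ permutes the pairs $(a_s,\gamma_s)$, it permutes the $J_t$ among themselves, so $J$ is Galois-invariant; being also an algebraic integer, $J\in\ZZ$. The $\ell^{rp}$ factor makes each $f_t^{(k)}(\gamma_s)$ an algebraic integer, and the $(x-\gamma_s)^{p}$ factor makes it divisible by $p$ save for the single boundary term $f_t^{(p-1)}(\gamma_t)=\ell^{rp}\prod_{s\ne t}(\gamma_t-\gamma_s)^{p}$; hence $J$ is congruent modulo $p$ to $\pm\,\ell^{rp}\bigl(\prod_{t=1}^{r}a_t\bigr)\Delta^{p}$, where $\Delta=\prod_{t\ne s}(\ell\gamma_t-\ell\gamma_s)\in\ZZ$ is nonzero because the $\gamma_s$ are distinct. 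Taking $p$ larger than $\ell$, than $|\Delta|$, and than every $|a_t|$ makes the right-hand side indivisible by $p$, so $J\ne0$ and $|J|\ge1$. On the other hand, estimating each integral along the straight segment from $0$ to $\gamma_s$ gives $|J|=\prod_t|J_t|=O\!\bigl((C^{p}/(p-1)!)^{r}\bigr)$ for a constant $C$ independent of $p$, whence $|J|<1$ once $p$ is large. This contradiction proves the theorem.

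\emph{Expected main obstacle.} The analytic estimate is routine majorization. The real work is the arithmetic: arranging the symmetrization so that the data $(a_s,\gamma_s)$ genuinely are Galois-stable --- which is what makes the product $J=\prod_t J_t$ a rational integer rather than merely an algebraic number --- and then tracking the exact $p$-divisibility of the derivatives $f_t^{(k)}(0)$ and $f_t^{(k)}(\gamma_s)$ precisely enough to be certain that exactly one boundary term escapes divisibility by $p$ and keeps $J$ from vanishing.
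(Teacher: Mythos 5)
The paper does not actually prove this statement: it is quoted as the classical Lindemann--Weierstrass theorem with a pointer to Theorem 1.4 of Baker's book, so there is no in-paper argument to compare against. Your sketch is the standard Hermite-style proof (essentially the one in \cite[Chapter 1]{Baker}) and its architecture is sound: the reduction from algebraic independence to linear independence of the $\ee^{\beta_j}$ over $\AAA$ via distinctness of the exponents (which is exactly where the hypothesis that the $\alpha_i$ are linearly independent over $\QQ$ is used), the Galois symmetrization with nontriviality guaranteed because the group algebra of the torsion-free group $(\CC,+)$ over a field is an integral domain, the integration-by-parts identity that converts $J_t$ into an algebraic number, and the divisibility-versus-size contradiction are all correct; your bookkeeping of the powers of $\ell$ (giving $\prod_t f_t^{(p-1)}(\gamma_t)=\ell^{rp}\Delta^p$) even comes out exactly right. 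Two small points to tighten. First, to conclude that $p\nmid \ell^{rp}\bigl(\prod_t a_t\bigr)\Delta^p$ you should require $p>\bigl|\prod_t a_t\bigr|$ --- this product is a nonzero rational integer by Galois invariance --- rather than $p>|a_t|$ for each $t$: the individual $a_t$ are algebraic integers whose complex absolute values say nothing directly about divisibility of the product. Second, after expanding the symmetrized product and collecting like exponents you discard the terms with zero coefficient; it is worth a sentence to note that the surviving set of exponents $\gamma_s$ is still $\mathrm{Gal}(M/\QQ)$-stable (the Galois action permutes the pairs $(a_s,\gamma_s)$ of the full formal sum and hence preserves its support), since that stability is precisely what makes $J=\prod_t J_t$ a rational integer. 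With those details supplied, the argument is complete.
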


\begin{theorem}\label{Theorem 3}
Let $S=\{\alpha_1,\alpha_2,\dots,\alpha_n,\dots\}$ be a countably infinite set of algebraic numbers which are linearly independent over $\QQ$.   If  $T=\{\ee^{\alpha_1}, \ee^{\alpha_2},\dots,\ee^{\alpha_n},\dots\}$,  then $\QQ(T)$,  is a topological transcendental field.
\end{theorem}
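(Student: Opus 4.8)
The plan is to reduce everything to three ingredients already in hand: the Lindemann--Weierstrass Theorem (Theorem~\ref{Theorem 2}), Proposition~\ref{Proposition 2}, and Proposition~\ref{Proposition 1}. The one genuinely new point is to upgrade the finite statement of Lindemann--Weierstrass to the assertion that the whole infinite set $T$ is algebraically independent over $\QQ$; after that, the result is essentially a chain of citations.

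First I would note that algebraic independence of a set is a property of finite character. By Definition~\ref{Definition 4}, $T$ fails to be algebraically independent over $\QQ$ precisely when some nonzero polynomial $p(X_1,\dots,X_n)\in\QQ[X_1,\dots,X_n]$ vanishes at some $n$-tuple of elements of $T$, and such a relation involves only finitely many members of $T$. Hence it suffices to show that for every $m\in\NN$ the finite set $\{\ee^{\alpha_1},\dots,\ee^{\alpha_m}\}$ is algebraically independent over $\QQ$. Since $S$ is linearly independent over $\QQ$, so is every finite subset $\{\alpha_1,\dots,\alpha_m\}$; therefore by Theorem~\ref{Theorem 2} the numbers $\ee^{\alpha_1},\dots,\ee^{\alpha_m}$ are algebraically independent. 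Letting $m$ range over $\NN$, we conclude that $T$ is algebraically independent over $\QQ$. (In particular the elements of $T$ are pairwise distinct, so $T$ is countably infinite.)

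Next I would apply Proposition~\ref{Proposition 2} with $\FF=\QQ$ and the algebraically independent set $T$: this gives that $\QQ(T)$ is a purely transcendental extension of $\QQ$. By Proposition~\ref{Proposition 1} this extension is completely transcendental, i.e. every element of $\QQ(T)\setminus\QQ$ is transcendental over $\QQ$, and an element of $\CC$ transcendental over $\QQ$ is a transcendental number. Thus $\QQ(T)\subseteq\QQ\cup\mathcal{T}$.

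Finally, since $\QQ(T)$ is a subfield of $\CC$ equipped with the subspace topology, the four field operations listed in Definition~\ref{Definition 5} are restrictions (and corestrictions) of the continuous operations on $\CC$, hence continuous, so $\QQ(T)$ is a topological field. Combining this with the previous paragraph and Definition~\ref{Definition 6} shows that $\QQ(T)$ is a topological transcendental field. I do not anticipate any serious obstacle: the only step needing care is the passage from the finite-dimensional Lindemann--Weierstrass statement to the infinite set $T$, and that is exactly the finite-character observation made above; the remaining steps are direct invocations of the stated propositions and definitions.
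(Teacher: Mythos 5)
Your proof is correct and follows essentially the same route as the paper: the paper's own proof is a one-line citation of Proposition~\ref{Proposition 2} and Proposition~\ref{Proposition 1}, implicitly relying on the Lindemann--Weierstrass Theorem stated just before it. You have simply made explicit the details the paper suppresses, in particular the finite-character argument upgrading Lindemann--Weierstrass to algebraic independence of the infinite set $T$, which is a worthwhile clarification but not a different approach.
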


\begin{proof} By Proposition~\ref{Proposition 2} and Proposition~\ref{Proposition 1},  $\QQ(T)$ is a topological transcendental field.
\end{proof}

\begin{theorem} \label{Theorem 4}There exist precisely $2^{\aleph_0}$ countably infinite topological  transcendental fields, each of which is homeomorphic to $\QQ$.
\end{theorem}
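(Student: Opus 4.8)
The plan is to establish the two assertions separately: first the cardinality count, then the topological characterization. For the cardinality upper bound, note that every countably infinite topological transcendental field is of the form $\QQ(T)$ for some countable set $T$ of transcendental numbers (by Remark~\ref{Remark 2} and the fact that $\QQ(T)$ is countable when $T$ is countable), and there are at most $\cc^{\aleph_0} = 2^{\aleph_0}$ countable subsets of $\CC$, hence at most $2^{\aleph_0}$ such fields. For the lower bound, I would exhibit $2^{\aleph_0}$ distinct such fields explicitly. The cleanest route is to use Theorem~\ref{Theorem 3}: fix a countably infinite set $S = \{\alpha_1, \alpha_2, \dots\}$ of algebraic numbers linearly independent over $\QQ$ (for instance $\alpha_n = \sqrt{p_n}$ for the $n$-th prime, or simply $\alpha_n = \log$ of something — any $\QQ$-linearly independent family of algebraic numbers will do, e.g. $\{1, \sqrt 2, \sqrt 3, \sqrt 5, \dots\}$), and for each infinite subset $A \subseteq \NN$ form $T_A = \{\ee^{\alpha_n} : n \in A\}$ and $\FF_A = \QQ(T_A)$. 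By the Lindemann--Weierstrass Theorem each $T_A$ is algebraically independent, so by Propositions~\ref{Proposition 1} and~\ref{Proposition 2} each $\FF_A$ is a topological transcendental field. Distinctness: if $A \neq B$, say $n \in A \setminus B$, then $\ee^{\alpha_n} \in \FF_A$; but $\ee^{\alpha_n} \notin \FF_B$ because $\{\ee^{\alpha_m} : m \in B\} \cup \{\ee^{\alpha_n}\}$ is algebraically independent (again Lindemann--Weierstrass), so $\ee^{\alpha_n}$ is transcendental over $\QQ(\{\ee^{\alpha_m}: m\in B\}) = \FF_B$ and in particular not an element of it. Since $\NN$ has $2^{\aleph_0}$ infinite subsets, this gives $2^{\aleph_0}$ pairwise distinct fields, completing the count.

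For the topological claim, I would invoke the classical Alexandrov--Urysohn characterization of $\QQ$: a topological space is homeomorphic to $\QQ$ if and only if it is a countable, metrizable, perfect space (no isolated points). Each countably infinite topological transcendental field $\FF$ is a subspace of $\CC$, hence metrizable, and is countably infinite by hypothesis. It remains to check that $\FF$ has no isolated points. This follows because $\FF$ is a topological field with more than one element and is not discrete: if some point of $\FF$ were isolated, then by homogeneity (translation by field elements is a homeomorphism of $\FF$ onto itself) every point would be isolated, so $\FF$ would be discrete; but a countably infinite subfield of $\CC$ cannot be discrete — it contains $\QQ$, which is dense-in-itself, so $0$ is not isolated in $\QQ \subseteq \FF$, hence not isolated in $\FF$. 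Therefore $\FF$ is a countable metrizable space without isolated points, and by Alexandrov--Urysohn it is homeomorphic to $\QQ$.

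The main obstacle is the lower-bound distinctness argument: one must ensure that the $2^{\aleph_0}$ candidate fields are genuinely pairwise non-equal as subfields of $\CC$, and for this the full strength of algebraic independence supplied by Lindemann--Weierstrass is exactly what is needed — it is not enough that the $\ee^{\alpha_n}$ be individually transcendental. The topological half is comparatively routine once one recalls the Alexandrov--Urysohn theorem; the only point requiring a word of care is the no-isolated-points property, which is handled by the homogeneity of topological fields together with the density-in-itself of $\QQ$. (One could alternatively argue directly that $\FF \supseteq \QQ$ is dense-in-itself since between any two rationals lies another, but the homogeneity argument is cleaner and extends the conclusion to the whole field at once.)
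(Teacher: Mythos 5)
Your proposal is correct and follows the same overall skeleton as the paper's proof: the same Lindemann--Weierstrass construction for the $2^{\aleph_0}$ lower bound, the same count of countable subsets of $\CC$ for the upper bound, the same characterization of $\QQ$ as the unique nonempty countably infinite metrizable space without isolated points (this characterization is due to Sierpi\'nski; the Alexandroff--Urysohn theorem is the analogous characterization of the irrationals, but the statement you actually invoke is the right one), and the same homogeneity reduction of ``no isolated points'' to ``not discrete''. You diverge from the paper in exactly one step, and to your advantage: to rule out discreteness the paper cites the classification of discrete subgroups of $(\CC,+)$ (isomorphic to $\ZZ$ or $\ZZ\times\ZZ$, neither of which underlies a field), whereas you observe directly that any subfield of $\CC$ contains $\QQ$, which is dense-in-itself in the euclidean topology, so $0$ is already non-isolated in the subspace $\QQ$ and a fortiori in $\FF$. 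Your route is more elementary and self-contained; the paper's buys a slightly stronger structural fact about discrete subgroups at the cost of an external citation. You are also more explicit than the paper on the distinctness of the fields: the paper asserts $\QQ(V)\ne\QQ(W)$ for $V\ne W$ ``due to algebraic independence'' without elaboration, while you spell out that for $n\in A\setminus B$ the number $\ee^{\alpha_n}$ is transcendental over $\QQ(T_B)$ and hence cannot lie in it. I see no gaps.
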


\begin{proof} Using the notation of  Theorem~\ref{Theorem 3}, there are $2^{\aleph_0}$ subsets of $T$ and, due to algebraic independence, any two such subsets $V, W$, $V\ne W$, are such that $\QQ(V)\ne \QQ(W)$. 

Further, there are only $2^{\aleph_0}$ countably infinite subsets of $\CC$.  So there exist precisely $2^{\aleph_0}$ countably infinite topological  transcendental groups.

By \cite[Theorem 1.9.6]{vanMill} the  space $\QQ$ of all rational numbers
 up to homeomorphism is the unique nonempty countably infinite separable metrizable space without isolated points. In a topological field (indeed in a topological group) there are isolated points if and only if the topological field has the discrete topology.  But by \cite[Theorem 6]{Morris1977} the only discrete subgroups of $\CC$ are isomorphic to $\ZZ$  and $\ZZ\times \ZZ$, neither of which has the algebraic structure of a field.  So every countably infinite topological transcendental field is homeomorphic to $\QQ$.
\end{proof}

\section{Topological Transcendental Fields of Continuum Cardinality} 

\msk 

\begin{theorem}\label{4}
Let $\KK$  be a topological transcendental field  of cardinality $\card(\KK)$.   Then the extension field $\KK(t)$ is a topological transcendental field for all but a set of cardinality $\card(\KK)$ of $t\in \CC$.
\end{theorem}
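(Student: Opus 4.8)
The plan is to show that the set of ``bad'' values of $t$, i.e. those for which $\KK(t)$ fails to be a topological transcendental field, has cardinality at most $\card(\KK)$. By Definition~\ref{Definition 6} and Remark~\ref{Remark 2}, $\KK(t)$ is a topological transcendental field precisely when it is a completely transcendental extension of $\QQ$; equivalently, every nonzero element of $\KK(t)$ that is not rational must be transcendental (over $\QQ$). So the bad set is $B = \{t \in \CC : \KK(t) \text{ contains an algebraic number outside } \QQ\}$, and the goal is $\card(B) \le \card(\KK)$.

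First I would dispose of the case $t \in \AAA$ (there are only $\aleph_0$ of these, hence at most $\card(\KK)$), and also the case $t \in \KK$ (which contributes nothing new, since $\KK(t) = \KK$ is already a topological transcendental field). So assume $t$ is transcendental and $t \notin \KK$. The key algebraic fact I want is: if $t$ is transcendental over $\KK$, then $\KK(t)$ is a purely transcendental extension of $\KK$, hence by Proposition~\ref{Proposition 1} a completely transcendental extension of $\KK$; combined with the fact that $\KK$ itself is completely transcendental over $\QQ$, one gets that $\KK(t)$ is completely transcendental over $\QQ$ — so such $t$ are never bad. Therefore $B$ is contained in the set of $t \in \CC$ that are \emph{algebraic over} $\KK$. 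The main step is then to bound the cardinality of the set $A_\KK$ of elements of $\CC$ algebraic over $\KK$: since $\KK$ is a field and $\card(\KK)$ is infinite, the polynomial ring $\KK[X]$ has cardinality $\card(\KK)$, each polynomial has finitely many roots, so $\card(A_\KK) = \card(\KK)$. Putting the pieces together, $B \subseteq A_\KK \cup \AAA \cup \KK$ has cardinality at most $\card(\KK)$, and for every $t$ outside this set $\KK(t)$ is a topological transcendental field, as required. (One should double-check the easy point that for any transcendental $t$, $\KK(t)$ with its subspace topology from $\CC$ is genuinely a topological field — this is immediate since it is a subfield of the topological field $\CC$.)

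The one place that needs care — and I expect it to be the main obstacle to a fully rigorous argument — is the claim that $t$ transcendental over $\KK$ forces $\KK(t)$ to be completely transcendental over $\QQ$, i.e. that ``completely transcendental'' is transitive in the relevant sense. This is not literally Proposition~\ref{Proposition 1} applied once; one needs the tower $\QQ \subseteq \KK \subseteq \KK(t)$ and the general transitivity of transcendence: if $\beta \in \KK(t)$ is algebraic over $\QQ$, then $\beta$ is algebraic over $\KK$, so $\beta \in A_\KK \cap \KK(t)$; but since $t$ is transcendental over $\KK$, the relative algebraic closure of $\KK$ in $\KK(t)$ is just $\KK$ itself, whence $\beta \in \KK$, and since $\KK$ is a topological transcendental field, $\beta \in \QQ$. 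Thus the ``bad'' elements of $\KK(t)$ can only come from $\KK$, and $\KK$ contributes none. Identifying exactly this — that $\KK$ is relatively algebraically closed in $\KK(t)$ when $t$ is transcendental over $\KK$ — is the crux; everything else is a cardinality count.
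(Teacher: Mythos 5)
Your proposal is correct, but it is organized differently from the paper's proof, so a comparison is worth making. The paper argues directly: it writes a generic element of $\KK(t)$ as a quotient $\bigl(c_0+c_1t+\dots+c_nt^n\bigr)/\bigl(d_0+d_1t+\dots+d_mt^m\bigr)$, sets it equal to an algebraic number $a$, and observes that for each choice of coefficients in $\KK$ and each of the $\aleph_0$ values of $a$ the resulting polynomial equation has finitely many roots $t$, giving at most $\aleph_0\times\card(\KK)=\card(\KK)$ bad values. You instead prove the structural statement that every bad $t$ must be algebraic over $\KK$: for $t$ transcendental over $\KK$ you invoke Propositions~\ref{Proposition 2} and~\ref{Proposition 1} on the extension $\KK(t)/\KK$ to see that $\KK$ is relatively algebraically closed in $\KK(t)$, so any $\beta\in\KK(t)$ algebraic over $\QQ$ lies in $\KK$ and hence in $\QQ$; then you count the set of elements of $\CC$ algebraic over $\KK$ as $\card(\KK[X])\cdot\aleph_0=\card(\KK)$. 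The two arguments rest on the same underlying fact (a bad $t$ is a root of one of $\card(\KK)$-many nonzero polynomials), but your version buys a cleaner statement --- the bad set is contained in the relative algebraic closure of $\KK$ in $\CC$ --- and it reuses the paper's own Propositions~\ref{Proposition 1} and~\ref{Proposition 2} rather than redoing the computation by hand; it also sidesteps the paper's slightly inaccurate remark that there are only ``a countably infinite number'' of expressions $z$ (there are $\card(\KK)$ of them when $\KK$ is uncountable, though the paper's final arithmetic is unaffected). The one point you flag as the crux --- that $\KK$ is relatively algebraically closed in $\KK(t)$ when $t$ is transcendental over $\KK$ --- is indeed exactly what Propositions~\ref{Proposition 2} and~\ref{Proposition 1} give you, so there is no gap.
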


\begin{proof} 
There are a countably infinite number of  members of $\KK(t)$  of the form $$z=\frac{c_0+c_1t+c_2t^2+\dots c_nt^n}{d_0+d_1t+d_2t^2+\dots d_mt^m},$$ 
for $c_0,c_1,\dots,c_n,d_0,d_1,\dots,d_m\in \KK$, $n,m\in\NN$. If $z$ is an algebraic number $a$, then
$$c_0+c_1t+c_2t_2+\dots c_nt^n- ad_0-ad_1t-ad_2t^2-\dots ad_mt^m=0.$$
For given $ c_0,c_1,\dots,c_n,d_0,d_1,\dots d_m$, and each of the countably infinite values of $a$, the Fundamental Theorem of Algebra says that there at most $\max(n,m)$  transcendental number solutions  for $t$. So $\KK(t)$ consists of only rational numbers and transcendental numbers except for at most $\aleph_0\times \card(\KK)=\card(\KK)$ values of $t$, which proves the theorem.  \end{proof}

Noting our Remark~\ref{Remark 5}, Corollary~\ref{5A} is of interest.

\begin{corollary}\label{5A}  If $t_1,t_2$ are transcendental numbers, then $\QQ(t_1,t_2)$ is a topological transcendental field for all but a countable infinite number of pairs $(t_1,t_2)$.  Indeed if $W$ is a countable set of transcendental numbers, then $\QQ(W)$ is a topological transcendental field for but a countably infinite number of sets $W$.  \qed\end{corollary}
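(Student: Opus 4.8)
The plan is to derive the corollary from Theorem~\ref{4} by adjoining one transcendental number at a time to a field that is already known to be a topological transcendental field.

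For the two-variable assertion, first I would fix a transcendental number $t_1$. By Proposition~\ref{Proposition 2A} the field $\QQ(t_1)$ is a topological transcendental field, and being a finitely generated subfield of $\CC$ it is countably infinite, so $\card(\QQ(t_1))=\aleph_0$. Applying Theorem~\ref{4} with $\KK=\QQ(t_1)$ then shows that $\QQ(t_1)(t_2)=\QQ(t_1,t_2)$ is a topological transcendental field for every $t_2\in\CC$ outside a set of cardinality $\aleph_0$. Hence, with $t_1$ held fixed, only an at most countably infinite set of transcendental $t_2$ fails to produce a topological transcendental field, which is the first statement.

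For a general countable set $W$, I would enumerate $W=\{w_1,w_2,\dots\}$ (the case of finite $W$ being the same argument truncated after finitely many steps), put $\KK_1=\QQ(w_1)$, and recursively $\KK_{k+1}=\KK_k(w_{k+1})$, so that $\QQ(W)=\bigcup_{k\ge 1}\KK_k$. By Proposition~\ref{Proposition 2A}, $\KK_1$ is a topological transcendental field; and if $\KK_k$ is a topological transcendental field, then $\card(\KK_k)=\aleph_0$ and Theorem~\ref{4} produces a set $B_k\subseteq\CC$ with $\card(B_k)=\aleph_0$ such that $\KK_k(t)$ is a topological transcendental field whenever $t\notin B_k$; thus $\KK_{k+1}$ is again a topological transcendental field provided $w_{k+1}\notin B_k$. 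Assuming this holds at every stage, it remains only to observe that the increasing union $\QQ(W)=\bigcup_k\KK_k$ is a topological transcendental field: the union of an increasing chain of subfields of $\CC$ is a subfield of $\CC$; that union lies in $\QQ\cup\mathcal{T}$ because each $\KK_k$ does; and it carries the subspace topology from $\CC$, in which the field operations, being restrictions of the continuous operations on $\CC$, remain continuous. Finally, $\bigcup_k B_k$ is a countable union of countable sets and hence countable, so in building $W$ one need only avoid an at most countably infinite set of forbidden values for each successive generator; this is the sense in which $\QQ(W)$ is a topological transcendental field for all but a countably infinite number of sets $W$.

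The verifications involved -- that a finitely or countably generated subfield of $\CC$ is countable, and that an increasing union of topological transcendental fields is again one -- are routine, so there is no serious obstacle. The only point requiring care is the bookkeeping: the exceptional set $B_k$ at the $k$-th step depends on the previously chosen generators $w_1,\dots,w_k$ (and on the chosen enumeration of $W$), so the statement is best read as a step-by-step assertion rather than as the existence of a single, enumeration-independent countable family of ``bad'' sets $W$.
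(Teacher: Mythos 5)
Your argument is correct and is exactly the derivation the paper intends: the corollary is stated with no written proof (just a \qed) as an immediate consequence of Theorem~\ref{4}, obtained by adjoining one transcendental generator at a time to the countable field built so far, precisely as you do. Your closing caveat --- that the exceptional set at each stage depends on the generators already chosen, so the ``all but countably many'' claim must be read per fixed earlier choices rather than as a single countable set of bad pairs $(t_1,t_2)$ --- is a fair and accurate reading of what Theorem~\ref{4} actually delivers.
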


\begin{corollary}\label{6}  Let $\KK$  be a topological transcendental field of cardinality $\aleph<2^{\aleph_0}$. Then there exists a $t\in\CC$ such that $\KK(t)$ is a topological  transcendental field which properly contains $\KK$.\qed
\end{corollary}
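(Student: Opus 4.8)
The plan is to derive this directly from Theorem~\ref{4} together with a cardinality count. First I would invoke Theorem~\ref{4}: since $\card(\KK)=\aleph$, the set $E=\{t\in\CC : \KK(t)\text{ is not a topological transcendental field}\}$ has cardinality at most $\aleph$. Next, the underlying set of $\KK$ itself has cardinality $\aleph$, so $E\cup\KK$ has cardinality at most $\aleph$, which by hypothesis is strictly less than $2^{\aleph_0}=\card(\CC)$. Therefore $\CC\setminus(E\cup\KK)$ is nonempty; choose any $t$ in it.

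For this choice of $t$ the conclusion is immediate. Because $t\notin E$, Theorem~\ref{4} tells us the extension field $\KK(t)$ is a topological transcendental field. Because $t\notin\KK$ while $t\in\KK(t)$, the containment $\KK\subseteq\KK(t)$ is proper. That is exactly the assertion.

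I do not anticipate any genuine obstacle here. The only point to keep in mind is that the notion of a topological transcendental field bundles an algebraic condition (being a subfield of $\CC$ lying inside $\QQ\cup\TT$) with a topological one (carrying the subspace topology from $\CC$ and being a topological field); but the subspace topology and the topological-field axioms hold automatically for every subfield of $\CC$, so the real content is the ``subset of $\QQ\cup\TT$'' requirement, which is precisely what excluding $E$ secures, and Theorem~\ref{4} already packages all of this. One could additionally remark that the same count produces $2^{\aleph_0}$ admissible values of $t$, and that one may iterate the construction to obtain a strictly increasing chain of topological transcendental fields above $\KK$, but neither refinement is needed for the stated corollary.
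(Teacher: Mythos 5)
Your proposal is correct and is precisely the argument the paper intends: the corollary is stated with no written proof because it follows immediately from Theorem~\ref{4} by the cardinality count you give (the exceptional set together with $\KK$ has cardinality $\aleph<2^{\aleph_0}=\card(\CC)$, so a suitable $t\notin\KK$ exists). Nothing to add.
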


\begin{theorem}\label{7} Let $E$ be any set of cardinality $\cc$ of transcendental numbers. Then there exists a topological transcendental field $\QQ(T)$ of cardinality $\cc$, where $T\subseteq E$. Further $\QQ(T)$ has $2^\cc$ distinct topological  transcendental subfields. \end{theorem}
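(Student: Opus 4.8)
The plan is to build the desired transcendental field by extracting from $E$ a large algebraically independent set and then to exploit the abundance of subsets of such a set. First I would invoke the theory of transcendence bases: since $E$ has cardinality $\cc$ and $\CC$ has transcendence degree $\cc$ over $\QQ$, a maximal algebraically independent subset $T$ of $E$ (which exists by Zorn's Lemma) must itself have cardinality $\cc$. Indeed, if $T$ had smaller cardinality, then $E$ — and hence all of the subfield it generates — would be algebraic over $\QQ(T)$, forcing $E$ to have cardinality at most $\card(\QQ(T)) = \max(\aleph_0, \card T) < \cc$, a contradiction. By Proposition~\ref{Proposition 2} and Proposition~\ref{Proposition 1}, $\QQ(T)$ is then a purely transcendental, hence completely transcendental, extension of $\QQ$; since $T \subseteq \mathcal{T}$ we conclude $\QQ(T) \subseteq \QQ \cup \mathcal{T}$, so $\QQ(T)$ is a topological transcendental field, and its cardinality is $\max(\aleph_0,\cc) = \cc$.

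Next I would produce the $2^\cc$ distinct topological transcendental subfields. For each subset $S \subseteq T$, form $\QQ(S)$; again by Propositions~\ref{Proposition 1} and~\ref{Proposition 2} this is a topological transcendental field, and it is a subfield of $\QQ(T)$ with the subspace topology inherited from $\CC$. The key point is that distinct subsets give distinct subfields: if $S_1 \neq S_2$, say $s \in S_1 \setminus S_2$, then $s \in \QQ(S_1)$, whereas $s \notin \QQ(S_2)$ because $s$ is transcendental over $\QQ(S_2)$ — this is exactly the statement that $T$ is algebraically independent over $\QQ$, so no element of $T \setminus S_2$ can satisfy a polynomial relation over $\QQ(S_2)$. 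Hence $S \mapsto \QQ(S)$ is an injection from the power set of $T$ into the set of topological transcendental subfields of $\QQ(T)$, and since $\card(\mathcal{P}(T)) = 2^\cc$ we obtain at least $2^\cc$ such subfields. The reverse inequality is immediate: $\QQ(T)$ has only $2^\cc$ subsets in total, so it cannot have more than $2^\cc$ subfields of any kind.

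I do not expect a serious obstacle here; the argument is essentially a packaging of transcendence-basis theory together with the already-established Propositions~\ref{Proposition 1} and~\ref{Proposition 2}. The one place warranting a little care is the cardinality bookkeeping — verifying that a maximal algebraically independent subset of $E$ genuinely has cardinality $\cc$ rather than something smaller — which rests on the standard fact that adjoining a set $S$ of generators to $\QQ$ and then taking the algebraic closure within that generated field does not increase cardinality beyond $\max(\aleph_0, \card S)$. A second minor point is to note that each $\QQ(S)$ does inherit the correct topology: by Definition~\ref{Definition 6} a topological transcendental field carries the subspace topology from $\CC$, and the subspace topology on $\QQ(S)$ from $\CC$ agrees with the subspace topology it inherits from $\QQ(T)$, so there is no ambiguity. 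With these observations in place the theorem follows.
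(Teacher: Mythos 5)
Your proof is correct, but it takes a genuinely different route from the paper's. The paper does not pass through a transcendence basis: it applies Zorn's Lemma to the family $\mathcal F$ of topological transcendental fields $\QQ(F)$ with $F\subseteq E$ having the property that distinct subsets of $F$ generate distinct subfields, seeds $\mathcal F$ using Corollary~\ref{5A}, takes a maximal element $\QQ(T)$, and rules out $\card(T)<\cc$ by invoking Theorem~\ref{4} to adjoin a further element of $E$, contradicting maximality. You instead extract a maximal algebraically independent subset $T\subseteq E$ directly, prove $\card(T)=\cc$ from the standard fact that the set of numbers algebraic over $\QQ(T)$ has cardinality $\max(\aleph_0,\card(T))$, and obtain the $2^\cc$ subfields from the injectivity of $S\mapsto\QQ(S)$ on subsets of an algebraically independent set. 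Your version buys explicitness at precisely the two points where the paper is terse: it makes fully precise why the maximal $T$ has cardinality $\cc$, and it replaces the paper's unargued assertion that $\QQ(T,\{e\})$ is ``easily seen to be a member of $\mathcal F$'' with the transparent observation that any element of $T$ outside $S_2$ is transcendental over $\QQ(S_2)$. The paper's version, in exchange, reuses its own earlier machinery (Theorem~\ref{4} and Corollary~\ref{5A}) and works with the formally weaker ``all subsets generate distinct subfields'' condition rather than full algebraic independence. Both arguments rest on Zorn's Lemma, just applied to different posets, and both are valid.
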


\begin{proof}
Consider the set $\mathcal F$ of all  topological transcendental fields $\QQ(F)$, where $F$ is a subset of $E$,  with the property  that for each pair   $W,V\subset F$ such that  $ W\ne V$, $\QQ(V)\ne \QQ(W)$. 

By Corollary~\ref{5A} and the fact that  $E$ is uncountable, there exist  $s,t\in E$,  $t\notin \QQ(s)$, $s\notin \QQ(t)$, and  $\QQ(s,t)$ is  a  topological transcendental field. Then $\QQ(s,t)\in \mathcal F$.

Put a partial order on the members of $\mathcal F$ by set theory containment. Consider any totally ordered subset $\mathcal S$ of  members of $\mathcal F$. Let  $\KK$ be the union of members of $\mathcal S$. Clearly it is a member of $\mathcal F$ and is an upper bound of $\mathcal S$. Therefore by Zorn's Lemma, $\mathcal F$ has a maximal member $\QQ(T)$, where $T\subseteq E$.

Suppose $T$ has cardinality $\aleph<\cc$, then by the proof of Theorem~\ref{4}, there exists an $e\in E$, such that $\QQ(T)(e)=\QQ(T,\{e\}) $ is a topological transcendental field
which is easily seen to be a member of $\mathcal F$.  This contradicts the maximality of $\QQ(T)$. So $T$ has cardinality $\cc$. 

Further by the definition of $\mathcal F$,  $\QQ(T)$ has $2^\cc$ distinct topological  transcendental subfields.
\end{proof}

\begin{theorem}\label{8}
Let  $E$ be a set of transcendental numbers of cardinality $\cc$.  Then there exist  $2^\cc$ topological  transcendental fields $\QQ(T)$, where $T\subseteq E$, and no two of the topological  transcendental fields are homeomorphic.
\end{theorem}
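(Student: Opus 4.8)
The plan is to produce, inside the set $E$ of transcendental numbers of cardinality $\cc$, a large family of topological transcendental fields $\QQ(T)$ that are pairwise non-homeomorphic. The natural strategy is to arrange that the fields have pairwise different cardinalities is impossible (there are only $\cc$ many cardinals $\le\cc$), so instead I would distinguish them by a topological invariant that can take $2^\cc$ values on subspaces of $\CC$ of size $\cc$. The obvious candidate, given the abstract's hint about Liouville numbers and the class $U$, is to use a transfinite/ algebraically-independent perfect set of Liouville numbers and then read off a homeomorphism invariant from, for instance, which members of a fixed uncountable "coordinate" set of Liouville numbers actually lie in $\QQ(T)$, or from the local structure (the set of points at which the space fails to be, say, locally compact in some refined sense). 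Since all these fields are separable metrizable and zero-dimensional, crude invariants like dimension or separability will not separate them; the invariant has to be subtle enough to be preserved by homeomorphism yet vary over a set of size $2^\cc$.

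The concrete route I would take: first, by the Erdős-type result quoted in Remark~\ref{Remark 5} together with a standard transfinite recursion of length $\cc$ (using at each stage Corollary~\ref{5A} / the proof of Theorem~\ref{4} to avoid the at-most-$\card(\KK)$ bad values), build inside $E$ a set $B$ of Liouville numbers of cardinality $\cc$ that is algebraically independent over $\QQ$; this is essentially Theorem~\ref{7} applied with $E$ replaced by the Liouville numbers in $E$, once we note the set of Liouville numbers has cardinality $\cc$ and, being comeagre, meets $E$ in a set we can work with, or more safely by redoing the Zorn/recursion argument staying inside $L\cap E$. Second, for each subset $A\subseteq B$ let $T=A$ and consider $\QQ(A)$; by Propositions~\ref{Proposition 2} and~\ref{Proposition 1} this is a topological transcendental field, of cardinality $\cc$ whenever $A$ is infinite. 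Third — the crux — I would exhibit a topological property $P_x$, indexed by the points $x\in B$, such that the space $\QQ(A)$ satisfies $P_x$ if and only if $x\in A$; then $A\mapsto\QQ(A)$ restricted to, say, the infinite subsets of $B$ gives $2^\cc$ spaces, and two of them $\QQ(A)$, $\QQ(A')$ can be homeomorphic only if $\{x:P_x\text{ holds}\}$ matches, i.e. $A=A'$. A workable $P_x$: "the point $x$ has a neighbourhood base in the space consisting of sets each of which, as a topological space, is homeomorphic to $\mathbb P$" fails for no point since everything is homeomorphic to $\mathbb P$ — so that will not work, and this is exactly where the difficulty sits.

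The honest statement of the main obstacle: in a zero-dimensional separable metrizable field every point looks locally like every other point as a bare topological space, so the separating invariant cannot be purely local-topological in a naive sense; it must encode arithmetic information in a homeomorphism-invariant way. The device I would actually deploy is a counting/closure invariant: for a countable dense subset one gets no help, but one can use the \emph{set of compact subsets} — in a field of this type, a subset is compact iff it is closed and bounded and "algebraically thin", and by choosing the Liouville numbers so that finite algebraic combinations of distinct elements of $B$ remain Liouville (again via Erdős/Mahler-type control), the compact neighbourhoods of a point $x\in A$ inside $\QQ(A)$ can be made to have a different Cantor–Bendixson-type derived structure according to whether $x$ is "isolated in the Liouville sense" within $A$. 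Concretely I expect the correct invariant to be of the form: associate to $\QQ(A)$ the countable ordinal (or the pattern of such ordinals) arising from iterating "remove the points with a compact neighbourhood homeomorphic to $2^\omega$" — and to show by the algebraic-independence and Liouville conditions on $B$ that this pattern recovers $A$. Making that last equivalence precise, and checking it is genuinely homeomorphism-invariant, is the hard part; the rest (the recursion producing $B$, the field-theoretic verifications, and the final counting to get $2^\cc$ pairwise non-homeomorphic fields) is routine given the earlier results in the paper.
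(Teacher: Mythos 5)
There is a genuine gap, and it sits exactly where you locate it yourself: the proposal never actually produces the homeomorphism invariant that is supposed to separate $2^\cc$ of the fields, and the scheme you sketch for it cannot work. You want a family of properties $P_x$, indexed by points $x$ of a fixed algebraically independent set $B$, with $\QQ(A)$ satisfying $P_x$ iff $x\in A$. But every topological group, hence every topological field $\QQ(A)$, is topologically homogeneous: translation by $y-x$ is a homeomorphism carrying $x$ to $y$. So any local topological property (having a compact neighbourhood of a given type, any Cantor--Bendixson-style derived behaviour at a point, etc.) holds at every point of $\QQ(A)$ or at none, and the set $\{x : P_x \text{ holds}\}$ can never recover $A$. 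Worse, when $x\notin\QQ(A)$ the property $P_x$ cannot even be formulated intrinsically; it would have to refer to the embedding in $\CC$, which is not a homeomorphism invariant. The side claims about compactness being equivalent to ``closed, bounded and algebraically thin'' and about finite algebraic combinations of Liouville numbers remaining Liouville are also unsupported and not needed.

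The paper's proof shows that no invariant is needed at all. By the Lavrentieff theorem (Theorem A8.5 of \cite{vanMill}), a homeomorphism between two subspaces of the Polish space $\CC$ extends to a homeomorphism between $G_\delta$ supersets; since there are only $\cc$ many $G_\delta$ subsets of $\CC$ and only $\cc$ many homeomorphisms between any two of them, each homeomorphism class of subspaces of $\CC$ contains at most $\cc$ members. Theorem~\ref{7} supplies $2^\cc$ \emph{distinct} topological transcendental fields $\QQ(T)$ with $T\subseteq E$; partitioning these into homeomorphism classes of size at most $\cc$ forces $2^\cc$ classes, i.e.\ $2^\cc$ pairwise non-homeomorphic fields. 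Your first step (the transfinite/Zorn construction of a large algebraically independent set inside $E$) is essentially Theorem~\ref{7} and is fine; replacing your third step by this counting argument completes the proof.
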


\begin{proof} By the Laverentieff Theorem,  Theorem A8.5 of \cite{vanMill}, there are at most $\cc$ subspaces of $\CC$ which are homeomorphic. 
So from Theorem~\ref{7} there are $2^\cc$ topological transcendental fields no two of which are homeomorphic. 
\end{proof}

\begin{corollary}\label{9} Let $E$ be the set $L$ of Liouville numbers or the Mahler set $U$ or the Mahler set $T$ or the Mahler set $S$. Then there exist  $2^\cc$ topological  transcendental fields $\QQ(T)$, where $T\subseteq E$, and no two of the topological  transcendental fields are homeomorphic. \qed
\end{corollary}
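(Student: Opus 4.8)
The final statement is Corollary~\ref{9}, which specializes Theorem~\ref{8} to the classical sets $L$, $U$, $T$ (Mahler), and $S$ (Mahler). My plan is to verify that each of these sets satisfies the single hypothesis of Theorem~\ref{8}, namely that it is a set of transcendental numbers of cardinality $\cc$, and then simply invoke that theorem. For the set $L$ of Liouville numbers, Liouville's theorem (Remark~\ref{Remark 1A}(v)) guarantees every element is transcendental, and the classical construction of Liouville numbers --- for instance $\sum_{k=1}^\infty a_k 10^{-k!}$ with $a_k\in\{1,2\}$ --- exhibits $2^{\aleph_0}=\cc$ distinct such numbers, so $\card(L)=\cc$. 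For the Mahler classes, Remark~\ref{Remark 1A}(iii) records that each of $S$, $T$, $U$ has cardinality $\cc$ and consists entirely of transcendental numbers (they partition $\mathcal T$).

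The only genuine verification, then, is the cardinality and transcendence of $L$; everything else is quoted from the preliminaries. I would state: ``Each of the four sets $L$, $U$, $T$, $S$ is a set of transcendental numbers of cardinality $\cc$ --- for $U$, $T$, $S$ this is Remark~\ref{Remark 1A}(iii), and for $L$ it follows from Liouville's theorem together with the standard observation that distinct choices of the digit sequence $(a_k)$ in $\sum_k a_k 10^{-k!}$ yield distinct Liouville numbers. Applying Theorem~\ref{8} with $E$ equal to each of these sets in turn gives the conclusion.''

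I do not anticipate a real obstacle here; the corollary is a direct corollary in the literal sense, and the work has all been done in Theorem~\ref{7} and Theorem~\ref{8}. The one point worth a sentence of care is notational: Theorem~\ref{8} uses $T$ both as a subset of $E$ and, potentially, as the Mahler class $T$; when taking $E$ to be the Mahler class $T$ one should rename the subset (say $T'\subseteq E$) to avoid collision, but this is cosmetic. A second minor point is that Theorem~\ref{8} is stated for an arbitrary set $E$ of transcendental numbers of size $\cc$, so no special structure of $L$, $U$, $T$, or $S$ beyond transcendence and cardinality is needed --- in particular the field-theoretic maximality argument of Theorem~\ref{7} goes through verbatim inside each of these sets.

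\begin{proof}
Each of the sets $L$, $U$, $T$ (Mahler), and $S$ (Mahler) is a set of transcendental numbers of cardinality $\cc$. For the Mahler classes $S$, $T$, $U$ this is recorded in Remark~\ref{Remark 1A}(iii). For the set $L$ of Liouville numbers, every element is transcendental by Liouville's theorem (Remark~\ref{Remark 1A}(v)), and $\card(L)=\cc$ since the numbers $\sum_{k=1}^{\infty} a_k 10^{-k!}$ with $a_k\in\{1,2\}$ are Liouville numbers and distinct digit sequences $(a_k)$ yield distinct such numbers, giving $2^{\aleph_0}=\cc$ of them. In each case we may therefore apply Theorem~\ref{8} with $E$ taken to be the set in question (renaming the subset produced there as $T'\subseteq E$ when $E$ is the Mahler class $T$, to avoid a clash of notation), obtaining $2^\cc$ pairwise non-homeomorphic topological transcendental fields $\QQ(T')$ with $T'\subseteq E$.
\end{proof}
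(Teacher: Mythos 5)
Your proposal is correct and matches the paper's approach: the authors state Corollary~\ref{9} with no written proof, treating it as an immediate application of Theorem~\ref{8} once one knows each of $L$, $U$, $T$, $S$ is a set of transcendental numbers of cardinality $\cc$, which is exactly the verification you supply (and your filling in of $\card(L)=\cc$ via the digit construction is a reasonable addition, since the paper only records the cardinalities of the Mahler classes explicitly).
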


 \noindent\bf Open Question 1.\rm\quad Does there exist a topological transcendental field of full measure.? (A set is said to be of full measure if its complement has measure zero.)
 
\ssk


\end{document}